\documentclass[11pt]{amsart}
\usepackage{amsmath,amssymb,amsthm,amscd}
\title{On the tautological ring of a Jacobian modulo rational equivalence}
\author{Baohua Fu and Fabien Herbaut}


\newtheorem{Thm}{Theorem}
\newtheorem{Lem}{Lemma}
\newtheorem{Prop}{Proposition}

\def\qit{{\mathbb Q}}
\def\zit{{\mathbb Z}}
\def\pit{{\mathbb P}}

\def\0{{\mathcal O}}

\def\I{{\mathcal I}}

\def\D{{\mathcal D}}
\def\F{{\mathcal F}}
\def\G{{\mathcal G}}

\def\T{{\mathcal T}}

\def\taut{{\mathcal{T}}}

\begin{document}
\maketitle
\begin{abstract}
We consider the Chow ring with rational coefficients of the
Jacobian of a curve. Assume $D$ is a divisor in a base point free
$g^r_d$ of the curve such that the canonical divisor $K$ is a
multiple of the divisor $D$. We find relations between
tautological cycles. We give applications for curves having a
degree $d$ covering of $\mathbb{P}^1$
 whose ramification points are all of order $d$, and then for hyperelliptic curves.
\end{abstract}
\section{Introduction}
\subsection{} For an abelian variety $X$ of dimension
$g$, we denote by $CH(X) = \oplus_{p=0}^g CH^p(X)$ the Chow ring
of $X$ with {\em rational coefficients}. Let $k: X \to X$ be the
morphism $x \mapsto kx$ for any $k \in \zit$. It turns out that
all the pull-backs $k^*$ and push-forwards $k_*$ on the level of
Chow rings can be diagonalized simultaneously, which gives the
following decomposition (\cite{B2}):
\begin{equation}
\label{decomp_beauville} CH^p(X)=\bigoplus_{i=p-g}^{p}
CH^p_{(i)}(X),
\end{equation}
where $CH^p_{(i)}(X)$ consists of elements $\alpha \in CH^p(X)$ such
that $k^* \alpha = k^{2p-i} \alpha$ (which is equivalent to  $k_*
\alpha=k^{2g-2p+i} \alpha$) for all $k \in \mathbb{Z}$.

\subsection{}
When $(X, \theta)$ is a principally polarized abelian variety, we
may identify $X$ with its dual. Let $p_i: X \times X \to X, i=1,
2$ be the two projections and $m: X \times X \to X$ the addition
map. The Poincar\'e line bundle on $X \times X$ is represented by
the divisor $P: = p_1^* \theta + p_2^* \theta - m^* \theta$. The
Fourier transform $\F: CH(X) \to CH(X)$ is defined by $\F(\alpha)
= (p_2)_*(p_1^* \alpha \cdot e^P)$.  The morphism $\F$ turns out
to be an automorphism of the $\qit$-vector space $CH(X)$. We refer
to \cite{B1} and \cite{B2} for a detailed study of the fundamental
properties of $\F$.

\subsection{}
The particular case of the Jacobian $J$ of a smooth projective
curve $C$ of genus $g \geq 2$ has been studied by several authors. For
any point $x_0 \in C$, we have a natural map: $\iota: C \to J$
given by $x \mapsto \0_C(x-x_0).$ We denote by $\T$ the smallest
subring of $CH(J)$ containing the class $[\iota(C)] = \iota_*[C]$
which is closed under the pull-backs $k^*$ and push-forwards $k_*$
for all $k \in \zit$ and under the Fourier transform $\F$, which
will be called the tautological subring of $CH(J)$. Notice that
this ring depends on the choice of the base point $x_0 \in C$.

Let us consider the decomposition of $[\iota(C)]$ given by
 \eqref{decomp_beauville} (note $CH^{g-1}_{(-1)}(J) = 0$):
\begin{equation}\label{decomp_courbe} [\iota(C)]=C_{(0)} + C_{(1)} +
\ldots + C_{(g-1)} \ \ \textrm{where } \ \ C_{(i)} \in
CH^{g-1}_{(i)}(J).
 \end{equation}
Let $\theta$ be a symmetric theta divisor on $J$.  As in \cite{P2}, we
define
\[ p_i=\F(C_{(i-1)}) \in CH^i_{(i-1)}(J) \textrm{ for } i \geq 1,
\
 q_i=\F(C_{(i)} \cdot \theta) \in CH^i_{(i)}(J) \textrm{ for } i \geq 0 \textrm{.}\]

Let $\simeq^{alg}$ be the algebraic equivalence. Note that $q_i
\simeq^{alg} 0$ for $i\geq 1$. Beauville proved in \cite{B3} that
the ring $\T/\simeq^{alg}$ is generated (as a subring under the
intersection product) by the classes $(p_i)_{i \geq 1}$. Recently,
Polishchuk proved in \cite{P2} (Thm 0.2 and Prop. 4.2) that $\T$
is generated by the classes $(p_i)_{i<g/2+1}$ and $(q_i)_{i <
(g+1)/2}$.

\subsection{}

The influence on $\T/\simeq^{alg}$  of the gonality of the curve
$C$ has been firstly studied by Colombo and van Geemen in
\cite{CvG}. They proved that if $C$ admits a $g_d^1$, then $p_i$
is algebraically equivalent to zero if $i \geq d$. Recently the
second named author computed some relations between the cycles
$p_i$ in $\T/\simeq^{alg}$ for a curve admitting a base point free
$g_d^r$. In \cite{GK}, van der Geer and Kouvidakis gave another
proof of the main result in \cite{Her}, by using the
Grothendieck-Riemann-Roch theorem. They gave simpler but equivalent relations
as Don Zagier proved it. Note that Polishchuk has found
universal relations in $\T / \simeq^{alg}$ (cf \cite{P1}) and in
$\T$ (cf \cite{P2}).

We will denote by $A(r, d, g)$
the Castelnuovo number, which is:
$$
A(r,d,g) = \sum_{i=0}^{r-1} \cfrac{(-1)^i}{d-2r+2}
\binom{i+g+r-d-2}{i} \binom{d-2r}{r-1-i} \binom{d-r+1-i}{r-i}.
$$
It was explained in \cite{Her} that if $A(r,d,g) \neq 0$ one can deduce
 from Colombo and van Geemen's Theorem that $p_i \simeq^{alg} 0$
 when $i \geq d - 2r+2$. In particular,
the ring $\T/\simeq^{alg}$ is generated by $(p_i)_{i \leq
d-2r+1}$.

\subsection{}
Our aim in this note is to generalize the above results to $\T$.
More precisely we shall prove the following:

\begin{Thm} \label{main}
Let $C$ be a smooth projective curve which admits a base
point free $g_d^r$ with $D$ being a class in $g_d^r$. Let $K$ be
the canonical class of $C$. If  $K$ is a multiple of $D$ in
$CH^1(C)_\qit$ and if $A(r,d,g) \neq 0$, then the tautological ring is generated by $(p_i)$
and $(q_i)$ with $i \leq d-2r+1$.
\end{Thm}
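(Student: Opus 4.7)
The plan is to combine the generation result of Polishchuk (\cite{P2}, Thm 0.2 and Prop. 4.2) with a refinement of the Herbaut / van der Geer--Kouvidakis relations that holds in $\T$ rather than only in $\T/\simeq^{alg}$. Since $\T$ is already generated by $(p_i)_{i<g/2+1}$ together with $(q_i)_{i<(g+1)/2}$, it is enough to express each such generator of index $i\geq d-2r+2$ as a polynomial in $\{p_j,q_j\}_{j\leq d-2r+1}$.

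First, I would revisit the construction of the Herbaut-type relations. Pushing forward via the Abel--Jacobi map $u_d\colon C^d\to J$ a suitable Chern-class expression associated to the $g^r_d$ produces, modulo algebraic equivalence, a polynomial identity expressing each $p_i$ with $i\geq d-2r+2$ in terms of the lower $p_j$, with leading coefficient a nonzero multiple of $A(r,d,g)$. The goal is to redo this push-forward at the level of rational equivalence, so that the ``error terms'' discarded in the algebraic-equivalence computation receive explicit expressions in terms of $\theta$ and the $C_{(j)}$, via the Beauville decomposition \eqref{decomp_courbe}.

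Second, I would use the identity $K=cD$ with $c=(2g-2)/d$ to control these error terms. Under $\iota_*$ this identity becomes a genuine rational-equivalence relation in $CH^{g}(J)_{\Q}$, and more generally it allows every intersection involving $K$ that arises in the computation (through the adjunction-type formula for $\iota^{*}\theta$) to be traded for $c$ times an intersection involving $D$. Since $D$ is the divisor already driving the Herbaut construction, such substitutions fit naturally into the existing machinery; after applying $\F$, the resulting correction terms become explicit polynomials in the $q_j$. Running the same procedure on the cycles $C_{(i)}\cdot\theta$ (rather than $C_{(i)}$) and then applying $\F$ yields the corresponding expressions for the $q_i$ with $i\geq d-2r+2$.

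The main technical obstacle is the second step: keeping systematic control of the summands in $CH^{*}_{(i)}(J)$ for $i\geq 1$ produced by the push-forward, and confirming that the specialisation $K=cD$ really suffices to convert all of them into admissible polynomial expressions in $\{p_j,q_j\}_{j\leq d-2r+1}$. The role of $A(r,d,g)\neq 0$ will be unchanged from the algebraic-equivalence setting, ensuring that the leading coefficient of each lifted relation is nonzero and hence that the higher $p_i$ and $q_i$ can actually be solved for; the role of $K=cD$ is precisely to make the lift possible, by tying the canonical class to the divisor that the Herbaut argument already processes.
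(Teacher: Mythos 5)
Your overall strategy for the $p_i$'s---lifting the Herbaut/van der Geer--Kouvidakis relations from algebraic to rational equivalence and using $K=cD$ to absorb the canonical-class terms into the divisor already present in the construction---is exactly what the paper does (in both of its proofs), and the role you assign to $A(r,d,g)\neq 0$ is also correct. But there is a genuine gap in how you close the argument. The lifted relations do not express $p_i$ ($i\geq d-2r+2$) as polynomials in $\{p_j,q_j\}_{j\leq d-2r+1}$; they only show that $p_{d-2r+2}$ lies in the ideal $\I_q$ generated by \emph{all} the classes $q_j$ with $j\geq 1$, including those of index $\geq d-2r+2$ that you are trying to eliminate. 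Your proposal for dealing with the high-index $q_i$---rerunning the push-forward on $C_{(i)}\cdot\theta$---is not substantiated and is not what makes the proof work: the GRR/symmetric-product computation naturally yields relations among the $p$'s modulo $\I_q$, not expressions of high $q$'s in terms of low ones.

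The missing ingredient is the paper's Proposition \ref{lem1}, a purely formal bootstrap using Polishchuk's operator $\D$. From $p_n\in\I_q$ one gets $p_m\in\I_q$ for all $m\geq n$ via $\D(p_2p_m)=\binom{m+2}{2}p_{m+1}-q_1p_m-q_{m-1}p_2$, and a weight argument in the Beauville grading (comparing $q_i\in CH^i_{(i)}$ with $p_{m+1}\in CH^{m+1}_{(m)}$) shows that only $q_i$ with $i\leq m-1$ can occur in such an expression. The high-index $q_k$ are then handled by induction through the identity $q_k=\D(q_1p_k)+q_1q_{k-1}$, which reduces them to the $(q_i)_{i\leq n-1}$. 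Without this step (or an equivalent), knowing $p_{d-2r+2}\in\I_q$ does not yield the stated generation of $\T$, so your argument as written does not reach the conclusion of the theorem.
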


We present two proofs : the first follows the method  in \cite{GK} and
uses an argument in \cite{Her}. The second one follows the methods of \cite{Her}.
In the two proofs we use the operator defined in \cite{P2}.
In both approaches, we encounter the same difficulty which
can be overcomed by assuming that $K$ is a multiple of $D$.
This condition is satisfied for example for curves which are complete
intersections. In this case our result is much sharper than the aforementioned
result of Polishchuk.

Some applications and refinements of this theorem are given in the
last sections, where we show (see Thm. \ref{pure}) that if $C$ is
a curve admitting a  degree $d$ map $C \to \pit^1$ whose
ramification points are all of order $d$, then the tautological
ring is generated by $(p_i)_{i \leq d-1}$ and $q_1$. In the case
of a hyperelliptic curve, we obtain a precise description of $\T$
(Prop. \ref{hyper}), which generalizes a result of Collino
(\cite{Co}, Thm. 2) on the validness of Poincar\'e's formula under
rational equivalence.
\section{Preliminary results}
According to Cor. 24 in \cite{Ma} or to Thm. 4 in \cite{Her}, if
$p_i \simeq^{alg} 0$ then $p_{j} \simeq^{alg}  0$ for $j \geq i$.
We generalize this to the case of rational equivalence in this
section, which will be needed in the proof of Theorem \ref{main}.
Recall the following operator $\D$ introduced in \cite{P2}:
$$
\begin{aligned}
\mathcal{D} = & \frac{1}{2}\sum_{m,n \geq 1} \binom{m+n}{n}
p_{m+n-1}
\partial_{p_m} \partial_{p_n} \\  &+   \sum_{m,n \geq 1}
\binom{m+n-1}{n} q_{m+n-1} \partial_{q_{m}} \partial_{p_{n}}-
\sum_{n \geq 1} q_{n-1}\partial_{p_n}.
\end{aligned} $$

Let $\I_q \subset \qit[p,q]$ be the ideal generated by the
elements $(q_i)_{i \geq 1}$, so $\I_q \simeq^{alg} 0$. By the
expression of $\D$, we have $\D (\I_q) \subset \I_q$.
\begin{Prop}\label{lem1}
Let $n$ be an integer such that $2 \leq n \leq g-1$. Suppose that
$p_n \in \I_q$, then

 i) for any $m$ such that $n \leq m
\leq g$,  $p_m$ is contained in the ideal generated by $(q_i)_{1
\leq i \leq m-1}$ ;

 ii) the tautological ring $\T$ is generated by $(p_i)_{i
\leq n-1}$ and $(q_i)_{i \leq n-1}$.

\end{Prop}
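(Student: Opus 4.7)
The plan is to deduce everything from the operator $\D$, its invariance $\D(\I_q)\subseteq \I_q$, and a weight argument based on the Beauville bigrading. I would prove (i) by induction on $m$ and then use (i) to deduce (ii).

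For the base case $m=n$ of part (i), the hypothesis $p_n\in \I_q$ combined with a weight argument gives the claim: any element of $CH^n_{(n-1)}(J)\cap \I_q$ is a $\qit$-linear combination of monomials $q_{i_1}\cdots q_{i_k}p_{j_1}\cdots p_{j_l}$, and the constraints $\sum_s i_s+\sum_r j_r=n$, $\sum_s i_s+\sum_r j_r-l=n-1$, $k\ge 1$, $i_s,j_r\ge 1$ force $l=1$ and then $i_s\le n-1$ for every $s$; hence $p_n\in (q_1,\dots,q_{n-1})$. For the inductive step, if $p_{m-1}\in \I_q$ for some $n<m\le g$, a direct computation from the definition of $\D$ gives
\[
\D(p_{m-1}p_2)=\binom{m+1}{2}p_m-q_{m-2}p_2-q_1 p_{m-1},
\]
and since $p_{m-1}p_2\in \I_q$ while $\D(\I_q)\subseteq \I_q$, the left-hand side lies in $\I_q$; the last two terms on the right are also visibly in $\I_q$, so $p_m\in \I_q$, and the weight argument then upgrades this to $p_m\in (q_1,\dots,q_{m-1})$. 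The boundary case $n=2$, $m=3$ uses $\D(p_2^2)=6p_3-2q_1 p_2$ in the same manner.

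For part (ii), let $R$ denote the subring of $\T$ generated by $\{p_j,q_j:j\le n-1\}$; I would show by strong induction on $m\ge n$ that both $p_m$ and $q_m$ lie in $R$. Part (i) writes $p_m$ as a polynomial in $p_j,q_j$ with $j\le m-1$, all of which lie in $R$ by induction, so $p_m\in R$. For $q_m$, I would take an expression of $q_{m-1}p_2$ (which is in $R$ by induction) as a polynomial in the generators of $R$ and apply $\D$ to both sides. Using
\[
\D(q_{m-1}p_2)=\binom{m}{2}q_m-q_1 q_{m-1}
\]
on the left, and expanding $\D$ on each monomial of $R$ on the right, yields a linear equation for $q_m$; after invoking part (i) to rewrite any $p_j$ with $j\ge n$ produced by $\D$ in terms of elements of $R$, the right-hand side lies in $R$, and one solves for $q_m\in R$.

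The main obstacle is precisely this last step for $q_m$: when $\D$ acts on a monomial of $R$, it can a priori produce terms in $q_j$ for $n\le j\le 2n-3$, and in particular a $q_m$-term whose coefficient one must verify does not conspire to cancel with $\binom{m}{2}$ on the left. This reduces to a controlled computation with binomial identities (together with the scalar $q_0\in \qit$); the check is immediate for $n=2$ but demands more care for larger $n$.
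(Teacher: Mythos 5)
Your part (i) is correct and is essentially the paper's own argument: the same computation $\D(p_2p_{m})=\binom{m+2}{2}p_{m+1}-q_1p_{m}-q_{m-1}p_2$ drives the induction, and your bidegree bookkeeping (a monomial $q_{i_1}\cdots q_{i_k}p_{j_1}\cdots p_{j_l}$ lands in $CH^m_{(m-1)}$ only if $l=1$, forcing every $q$-index to be at most $m-1$) is the same weight argument the paper uses to upgrade $p_m\in\I_q$ to $p_m\in(q_1,\dots,q_{m-1})$.

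Part (ii), however, has a genuine gap, and it is exactly the one you flag at the end. Applying $\D$ to a representation of $q_{m-1}p_2$ in terms of the generators of $R$ produces, among other things, terms $q_{a+b-1}\,\partial_{q_a}\partial_{p_b}$ with $a,b\le n-1$, hence classes $q_j$ with $j$ as large as $2n-3$; when $n\le m\le 2n-3$ a term $c\,q_m$ with $c\in\qit$ appears on the right (coming from monomials $q_ap_b$ with $a+b=m+1$, for which $\partial_{q_a}\partial_{p_b}$ returns a scalar), and you must rule out $c=\binom{m}{2}$ before you can solve for $q_m$. There is no a priori reason this cancellation does not occur, and worse, the splitting of $\D(P)$ into ``$q_m$ times a scalar plus the rest'' depends on the chosen polynomial representative $P$ of $q_{m-1}p_2$ (only the total class $\D(P)\in\T$ is well defined), so the ``linear equation for $q_m$'' is not even well posed without further care. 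The paper sidesteps all of this by using Polishchuk's identity $q_m=\D(q_1p_m)+q_1q_{m-1}$ instead: substituting $p_m=\sum_{j\le n-1}q_jA_j$ (which follows from your part (i) together with the induction hypothesis) gives $q_1p_m=\sum_j q_1q_jA_j$, where every monomial carries \emph{two} $q$-factors of index $\le n-1$; since each term of $\D$ involves at most one $q$-derivative, at least one of these factors survives, so $\D(q_1q_jA_j)$ lies in the ideal $(q_1,\dots,q_{n-1})$ outright --- there is no coefficient to solve for and no cancellation to exclude. Replacing your test class $q_{m-1}p_2$ by $q_1p_m$ repairs the argument.
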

\begin{proof}
To prove i) we use an induction on $m$. Assume $m\geq 2$ and $p_m \in \I_q$, then $p_2 p_m
\in \I_q$. We apply the operator $\D$, which gives $\D(p_2 p_m)
\in \I_q$.  Note that
$\mathcal{D}(p_2p_m)=\binom{m+2}{2}p_{m+1}-q_1p_m-q_{m-1}p_2$,
thus $p_{m+1} \in \I_q$. If we write $p_{m+1} = \sum_{i \geq 1} q_i A_i$
with $A_i \in \T$, then $A_i = 0$ for $i \geq m+1$, since $q_i \in
CH^i _{(i)}$, $p_{m+1} \in CH^{m+1}_{(m)}$ and $A_i \in \oplus_{j
\geq 0}CH_{(j)}$.

To prove ii), we just need to prove that for any $m$
such that $n \leq m \leq g$, the class $q_m$ is contained in the
ideal generated by $(q_i)_{i \leq n-1}$. Assume this is true for
all $m <k$, we shall prove it for $m=k$. By i) and
the hypothesis of induction, we have $p_k = \sum_{j=1}^{n-1} q_j
A_j$ with $A_j \in \mathcal{T}$. Now the claim follows from the following relation in
\cite{P2}:
\begin{displaymath}
q_k \ = \ \mathcal{D} \big{(} q_1p_k\big{)} + q_1q_{k-1} =
\sum_{j=1}^{n-1} \D \big{(} q_1 q_j A_j\big{)} + q_1q_{k-1} .
\end{displaymath}
\end{proof}

\section{First proof of the Theorem}
We shall follow the method of \cite{GK} coupled with arguments in
\cite{Her} to prove our main theorem.

Let $C$  be a smooth projective curve with a base-point-free
linear system $g_d^r$, which gives arise to a morphism: $\gamma: C
\to \pit^r$. Let $Y \subset C \times \check{\pit}^r$ be the
subvariety $\{(p, y)| \gamma(p) \in y  \}$. The projection $\phi:
Y \to C$ is a $\pit^{r-1}$-bundle. More precisely if we denote by
$E: = \gamma^* (T^* \pit^r)$, then $Y = \pit(E)$ as a projective
bundle over $C$. Recall that we have the following relative Euler
sequence:
$$ 0 \to \0_Y \to \phi^*(E) \otimes \0_\phi(1) \to T_Y \to \phi^* T_C \to 0.  $$
This gives $td(T_Y) = td(\phi^* T_C) \cdot td(\phi^*(E)\otimes
\0_\phi(1)) = (1-\frac{1}{2} \phi^* K) \cdot td(\phi^*(E)\otimes
\0_\phi(1)),$ where $K$ is a canonical divisor of the curve $C$.

Let us denote by $\tilde{\alpha}: Y \to \check{\pit}^r$ the second
projection, which is a finite morphism of degree $d$. Note that
$c_1(\phi^*(E)) = \phi^* (\gamma^*(c_1(T^* \pit^r))) = -(r+1)
\phi^* D$, where $D$ is a divisor in the linear system $g_d^r$.
Furthermore the higher Chern classes of $\phi^*(E)$ vanish, as it
is the pull-back from a vector bundle over a curve. This gives
that $td(\phi^*(E)\otimes \0_\phi(1))$ is a linear function in
$\phi^*(D)$ with coefficients in $\tilde{\alpha}^*
CH(\check{\pit}^r).$

From now on, we will identify $\pit^r$ with $\check{\pit}^r$ and
make use of the following self-explaining notations:
\begin{equation*}\begin{CD}
\pit^r @<v<< \pit^r \times J @<\alpha<<  Y \times J @>\pi>> Y @>\tilde{\alpha}>>  \pit^r \\
  @.           @VpVV                      @V{\psi}VV         @V{\phi}VV  @. \\
@.     J @<q<<         C\times J @>{\tilde{\pi}}>> C @.
\end{CD}
\end{equation*}

By the precedent discussions, the relative Todd class $td(\alpha)
= \pi^* td(\tilde{\alpha})$ is of the form $(1-\frac{1}{2} \pi^*
\phi^*K)(A(x)+B'(x) \pi^* \phi^* (D)) = A(x) + B'(x) \pi^* \phi^*D
- \frac{1}{2} A(x) \pi^* \phi^* K,$ for some polynomials $A, B'$
of degrees at most $r-1$, where $x = \alpha^*(\xi)$ and $\xi = v^*(h)$
for a hyperplane $h \subset \pit^r$.

Let us denote by $\Pi$ the divisor of the Poincar\'e line bundle
on $C \times J$ and $l: = \psi^* (\Pi)$. Let $L $ be the line
bundle on $Y \times J$ associated to $l$. As in [GK], we apply
the Grothendieck-Riemann-Roch
formula to $V_k: = \alpha_*(L^{\otimes k})$: $$ ch(V_k) =
\alpha_*(e^{k l} td(\alpha)) = \alpha_* ( e^{kl}A(x) + e^{kl}
B'(x) \pi^* \phi^*D - \frac{1}{2} e^{kl} A(x) \pi^* \phi^* K).$$

The following Lemma can be proved in a similar way as in
[GK](Lemma 3.3).
\begin{Lem}
In $CH(\pit^r \times J)$, we have the following relations for $\nu
\geq 0$:
\begin{equation*}
\alpha_*(l^\mu \cdot x^\nu) = \begin{cases} & q_*(\Pi^\mu)
\xi^{\nu +1} + q_*(\Pi^\mu \cdot \tilde{\pi}^*D) \xi^\nu, \text{if
$\mu > 0 $}, \\ &d \xi^\nu, \text{if $\mu=0$}.
\end{cases}
\end{equation*}
For any divisor $D_0$ on $C$, we have
\begin{equation*}
\alpha_*(l^\mu \cdot x^\nu \cdot \pi^* \phi^* D_0) = \begin{cases}
& q_*(\Pi^\mu \cdot \tilde{\pi}^* D_0) \xi^{\nu +1}, \text{if $\mu
> 0 $}, \\ &\deg(D_0) \xi^{\nu+1}, \text{if $\mu=0$}.
\end{cases}
\end{equation*}
\end{Lem}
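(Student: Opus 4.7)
The plan is to factor $\alpha$ through a closed embedding into the ambient product $C\times\pit^r\times J$. Define $\iota:Y\times J\hookrightarrow C\times\pit^r\times J$ by $(y,j)\mapsto(\phi(y),\tilde\alpha(y),j)$, so that, writing $\mathrm{pr}_1,\mathrm{pr}_2,\mathrm{pr}_3,\mathrm{pr}_{13},\mathrm{pr}_{23}$ for the obvious projections, $\alpha=\mathrm{pr}_{23}\circ\iota$ and $\psi=\mathrm{pr}_{13}\circ\iota$. The basic classes then pull back as $l=\iota^*\mathrm{pr}_{13}^*\Pi$, $x=\iota^*\mathrm{pr}_2^*h$, and $\pi^*\phi^*D_0=\iota^*\mathrm{pr}_1^*D_0$. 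The key preliminary input is the identification of the fundamental class
\[ [Y\times J]=\mathrm{pr}_1^*D+\mathrm{pr}_2^*h\in CH^1(C\times\pit^r\times J), \]
which follows from recognizing $Y$ as the preimage, under $\gamma\times\mathrm{id}:C\times\pit^r\to\pit^r\times\pit^r$, of the universal incidence divisor $I\subset\pit^r\times\pit^r$ of bidegree $(1,1)$.

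With this in hand, I would apply the projection formula along $\iota$ to get
\[ \alpha_*(l^\mu x^\nu)=(\mathrm{pr}_{23})_*\bigl([Y\times J]\cdot\mathrm{pr}_{13}^*\Pi^\mu\bigr)\cdot\xi^\nu, \]
and split the right-hand side into the two pieces coming from $\mathrm{pr}_1^*D$ and $\mathrm{pr}_2^*h$. Both pieces are handled by flat base change applied to the Cartesian square with vertices $C\times\pit^r\times J$, $\pit^r\times J$, $C\times J$, $J$, which yields $(\mathrm{pr}_{23})_*\mathrm{pr}_{13}^*=p^*q_*$. The $\mathrm{pr}_2^*h$ piece then contributes $\xi^{\nu+1}q_*(\Pi^\mu)$ and the $\mathrm{pr}_1^*D$ piece contributes $\xi^\nu q_*(\tilde\pi^*D\cdot\Pi^\mu)$, proving the $\mu>0$ formula. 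The $\mu=0$ case is immediate from $q_*(1)=0\in CH^{-1}(J)=0$ together with $q_*(\tilde\pi^*D)=\deg D=d$.

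For the second formula, inserting the extra factor $\pi^*\phi^*D_0=\iota^*\mathrm{pr}_1^*D_0$ into the same computation makes the $\mathrm{pr}_1^*D$ piece vanish, since $\mathrm{pr}_1^*D\cdot\mathrm{pr}_1^*D_0=\mathrm{pr}_1^*(D\cdot D_0)=0$ as $CH^2(C)=0$. Only the $\mathrm{pr}_2^*h$ piece survives, yielding $\xi^{\nu+1}q_*(\tilde\pi^*D_0\cdot\Pi^\mu)$ for $\mu>0$ and $\deg(D_0)\,\xi^{\nu+1}$ for $\mu=0$.

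The only non-formal step is the identification $[Y\times J]=\mathrm{pr}_1^*D+\mathrm{pr}_2^*h$; once this is settled through the incidence interpretation of $Y$, the remainder is routine manipulation with the projection formula and flat base change, in the spirit of the corresponding computation in \cite{GK}.
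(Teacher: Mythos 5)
Your proof is correct, and it is essentially the argument the paper has in mind: the paper omits the proof entirely, deferring to Lemma 3.3 of \cite{GK}, whose computation rests on exactly the same two ingredients you use, namely the identity $[Y]=\mathrm{pr}_1^*D+\mathrm{pr}_2^*h$ in $CH^1(C\times\check{\pit}^r)$ coming from the $(1,1)$ incidence divisor, followed by the projection formula and flat base change (plus $q_*(1)=0$ and $CH^2(C)=0$ to split off the $\mu=0$ and $D_0$ cases). You have simply supplied the details the paper leaves to the reference, and they check out.
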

Using these formulae, one can easily obtain
\begin{align*}
\alpha_*(e^{kl} \cdot A(x)) & = \alpha_*(A(x))
 +\sum_{\mu=1}^\infty \frac{k^\mu}{\mu !} ( q_*(\Pi^\mu) \xi
A(\xi) +  q_*(\Pi^\mu \cdot \tilde{\pi}^* D) A(\xi))) \\
&= d A(\xi) + q_*(e^{k\Pi}) \xi A(\xi) + q_*(e^{k \Pi} \cdot
\tilde{\pi}^* D) A(\xi) - q_*(\tilde{\pi}^* D) A(\xi) \\
&= q_*(e^{k\Pi}) \xi A(\xi) + q_*(e^{k \Pi} \cdot \tilde{\pi}^* D)
A(\xi).
\end{align*}

In a similar way, one obtains $\alpha_*(e^{kl} B'(x) \cdot \pi^*
\phi^* D) = q_*(e^{k\Pi} \cdot \tilde{\pi}^* D) \xi B'(\xi)$ and
$\alpha_*(e^{kl} A(x) \cdot \pi^* \phi^* K) = q_*(e^{k\Pi} \cdot
\tilde{\pi}^* K) \xi A(\xi). $


Recall that we have fixed a base point $x_0 \in C$ and considered the natural map $\iota:
C \to J$.
\begin{Lem}
For any $k \in \zit$ and any cycle $D' \in CH(C)_\qit$, we have
$\F(k_* \iota_* D') = q_*(e^{k\Pi} \cdot \tilde{\pi}^* D').$
\end{Lem}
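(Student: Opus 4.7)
The plan is to transport the Fourier-transform computation from $J\times J$ down to $C\times J$ via the morphism $\iota_k\times\mathrm{id}_J\colon C\times J\to J\times J$, where $\iota_k:=k\circ\iota\colon C\to J$, and then to identify the pullback of the Poincaré divisor explicitly.

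First, unwind the definition of $\F$. Since $k_*\iota_* D' = (\iota_k)_* D'$, one has
\[\F(k_*\iota_* D') \;=\; (p_2)_*\bigl(p_1^*(\iota_k)_* D' \cdot e^P\bigr).\]
Form the Cartesian square with top arrow $\iota_k\times\mathrm{id}_J$, bottom arrow $\iota_k$, left vertical $\tilde\pi$ and right vertical $p_1$. Since $p_1$ is flat and $\iota_k$ is proper, flat base change at the level of Chow groups yields $p_1^*(\iota_k)_* = (\iota_k\times\mathrm{id}_J)_*\,\tilde\pi^*$. Combining this with the projection formula and the identity $p_2\circ(\iota_k\times\mathrm{id}_J)=q$, we rewrite
\[\F(k_*\iota_* D') \;=\; q_*\bigl(\tilde\pi^* D'\cdot (\iota_k\times\mathrm{id}_J)^* e^P\bigr).\]

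The remaining task is to show $(\iota_k\times\mathrm{id}_J)^* P = k\,\Pi$. Factor $\iota_k\times\mathrm{id}_J = (k\times\mathrm{id}_J)\circ(\iota\times\mathrm{id}_J)$. By the very definition of the Poincaré bundle on $C\times J$ as the pullback of the Poincaré bundle from $J\times \hat J\simeq J\times J$, we have $(\iota\times\mathrm{id}_J)^* P = \Pi$. The biadditivity property $(m\times n)^*P = mn\cdot P$ of the Poincaré divisor on $J\times J$ gives $(k\times\mathrm{id}_J)^* P = kP$. Composing, $(\iota_k\times\mathrm{id}_J)^* P = k\Pi$, hence $(\iota_k\times\mathrm{id}_J)^* e^P = e^{k\Pi}$, and the desired formula follows at once.

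The only step that is not automatic is the pullback computation of $P$: one must invoke both the normalization of $\Pi$ as $(\iota\times\mathrm{id}_J)^*P$ and the bilinearity under multiplication-by-$k$. Apart from this, the argument is a mechanical application of flat base change and the projection formula, so no serious obstacle is expected.
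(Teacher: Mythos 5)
Your proof is correct and follows essentially the same route as the paper: both arguments reduce to flat base change in the Cartesian square over $\iota$ (resp. $\iota_k$), the projection formula, and the identities $(k\times\mathrm{id})^*P=kP$ and $(\iota\times\mathrm{id})^*P=\Pi$. The only difference is organizational — you compose the two base changes (for $k$ and for $\iota$) into a single one for $\iota_k$ and absorb the factor $k$ entirely into the pullback of $P$, whereas the paper performs them in two steps via the map $k'$.
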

\begin{proof}
We have a Cartesian diagram:
\begin{equation*}
\begin{CD}
C \times J  @>\iota'>>  J \times J \\ @V\tilde{\pi}VV  @Vp_1VV \\
C @>\iota>> J
\end{CD}
\end{equation*}

Let $q': J \times J \to J$ be the second projection and $P$ the
Poincar\'e divisor on $J \times J$. We have $\F(k_* \iota_* D') =
q'_*(p_1^* k_* \iota_* D' \cdot e^P) = q'_*(k'_* p_1^* \iota_* D'
\cdot e^P) = q'_* k'_* (p_1^* \iota_* D' \cdot e^{kP}) = q'_*
(\iota'_* \tilde{\pi}^* D' \cdot e^{k \Pi}) = q_*(e^{k\Pi} \cdot
\tilde{\pi}^* D').$ Here $k': J \times J \to J \times J$ is given
by $(x, y) \mapsto (kx, y)$.
\end{proof}

Finally we obtain the following:
\begin{equation*}
ch(V_k) = \F(k_*\iota_* [C]) \xi A(\xi) + \F(k_* \iota_*
D)(A(\xi)+\xi B'(\xi)) - \frac{1}{2} \F(k_* \iota_* K) \xi A(\xi).
\end{equation*}

Note that modulo algebraic equivalence, this gives exactly the
formula in Prop. 3.1 in [GK]. From now on, we assume furthermore
that $K = \frac{1}{s} D$ in $CH^1(C)_\qit$ for some $s \in \qit$.
Let $B(\xi) = sA(\xi) + s \xi B'(\xi) - \frac{1}{2} \xi A(\xi)$,
then we get a simpler formula for $ch(V_k)$ as follows:
$$
ch(V_k) = \F(k_* \iota_* [C]) \xi A(\xi) + \F(k_* \iota_* K)
B(\xi).
$$

Recall that $\F(k_* \iota_* [C])= \sum_{i=1}^{g} k^{i+1} p_i $ and
$\F(k_* \iota_* K) = (2g-2) + 2 \sum_{i=1}^g k^i q_i. $ Let us
write $\xi A(\xi) = \sum_{i=0}^{r-1} a_i \xi^{i+1}$ and $B(\xi) =
\sum_{i=0}^r b_i \xi^i$, then a direct calculus shows:
$$
ch_j(V_k) = 2sk^j q_j + \sum_{i=1}^{j-1}(k^{j-i+1}a_{i-1} p_{j-i}
+ 2 k^{j-i} b_i q_{j-i}) \xi^i + (2g-2) b_j \xi^j.
$$
To simplify the notations, we will put $ch_j(V_k) = \sum_{m=0}^j
A_m(j) \xi^m$, where $A_m(j)$ is of codimension $j-m$.

Note that modulo $\I_q$, the expressions of $A_m(j)$ are the same
as those in \cite{GK}. By the proof of Prop. 3.6 and that of the
main Theorem in \cite{GK} we obtain the following generalization
of the main result in \cite{GK}:
\begin{Thm}\label{second}
If $C$ admits a base point free $g_d^r$ with $D \in g_d^r$  and
$K$ is a multiple of $D$ in $CH^1(C)_\qit$, then $ \sum_{\alpha_1
+ \cdots +\alpha_r = M-2r+1}  (\alpha_1+1)! \cdots (\alpha_r+1)!
p_{\alpha_1+1} \cdots p_{\alpha_r+1}$ is contained in $\I_q$ for
all $M \geq d$.
\end{Thm}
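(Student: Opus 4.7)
The plan is to adapt the argument of \cite{GK} to rational equivalence, using the formula
$$ch(V_k) = \F(k_*\iota_*[C])\,\xi A(\xi) + \F(k_*\iota_* K)\,B(\xi)$$
derived above under the hypothesis $K = D/s$ in $CH^1(C)_\qit$. Since $V_k = \alpha_*(L^{\otimes k})$ is a vector bundle of rank $d$ on $\pit^r \times J$, every Chern class $c_{M+1}(V_k)$ with $M \geq d$ vanishes, and each such vanishing translates via Newton's identities into a universal polynomial relation among $ch_1(V_k),\ldots,ch_{M+1}(V_k)$. Expanding $ch_j(V_k)=\sum_m A_m(j)\xi^m$ inside $CH(\pit^r\times J)=CH(J)[\xi]/(\xi^{r+1})$ and isolating the coefficient of $\xi^r$ in such a relation -- possibly combined with extracting an appropriate power of $k$ -- then yields an identity in $\T$.

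The decisive observation is that modulo $\I_q$, the coefficient $A_m(j)$ either vanishes (when $m=0$), equals $k^{j-m+1}\,a_{m-1}\,p_{j-m}$ (when $1\le m\le j-1$), or is a scalar constant (when $m=j$). These expressions coincide \emph{verbatim} with those used in \cite{GK} modulo algebraic equivalence. Since the manipulation of the $A_m(j)$ carried out in \cite{GK} (leading to Proposition 3.6 and the main theorem there) is a purely formal algebraic computation in the coefficients, the same argument can be replayed in the present setting, with every occurrence of ``$\equiv 0$ modulo algebraic equivalence'' replaced by ``$\in\I_q$''. The combinatorial identification of the extracted $\xi^r$-coefficient as a linear combination of products $(\alpha_1+1)!\cdots(\alpha_r+1)!\,p_{\alpha_1+1}\cdots p_{\alpha_r+1}$ with $\sum\alpha_i=M-2r+1$ is inherited wholesale from \cite{GK}.

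The main obstacle, and the precise role of the hypothesis $K=\tfrac{1}{s}D$, is to prevent the $K$-dependent piece of $ch(V_k)$ from introducing tautological classes outside $\I_q$. Under this hypothesis the $\F(k_*\iota_* D)$ and $\F(k_*\iota_* K)$ contributions to the Grothendieck--Riemann--Roch expansion combine into the single term $\F(k_*\iota_*K)\,B(\xi)$; and since $\F(k_*\iota_*K)=(2g-2)+2\sum_{i\ge 1}k^i q_i\equiv 2g-2\pmod{\I_q}$, this contribution reduces to a scalar multiple of $B(\xi)$ modulo $\I_q$. This is exactly the structural feature exploited in \cite{GK} modulo algebraic equivalence. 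Without the proportionality assumption, $\F(k_*\iota_* K)$ and $\F(k_*\iota_* D)$ would contribute independently, and the resulting identity would not collapse to a relation purely among the $p_i$'s modulo $\I_q$; this is the single non-formal step of the argument.
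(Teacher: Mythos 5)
Your proposal matches the paper's proof: both derive the collapsed Grothendieck--Riemann--Roch formula $ch(V_k) = \F(k_*\iota_*[C])\,\xi A(\xi) + \F(k_*\iota_*K)\,B(\xi)$ from the hypothesis $K=\tfrac{1}{s}D$, observe that the coefficients $A_m(j)$ of $ch_j(V_k)=\sum_m A_m(j)\xi^m$ agree modulo $\I_q$ with those of van der Geer--Kouvidakis modulo algebraic equivalence, and then replay their formal argument (vanishing of the Chern classes of the rank-$d$ bundle $V_k$ above degree $d$) with $\I_q$ in place of algebraic equivalence. Your identification of the exact role of the proportionality hypothesis (making the $D$- and $K$-contributions merge into a term that is scalar modulo $\I_q$) is precisely the point the paper emphasizes, so the proposal is correct and essentially identical in approach.
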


When $r \geq 2$, we recall the Castelnuovo number:
$$
A(r,d,g) = \sum_{i=0}^{r-1} \cfrac{(-1)^i}{d-2r+2}
\binom{i+g+r-d-2}{i} \binom{d-2r}{r-1-i} \binom{d-r+1-i}{r-i}.
$$

Consider the case $M=d$ in the precedent theorem. Then the
argument in section 6.2 [Her] shows that $p_{d-2r+2}$ is contained
in $\I_q$ (here we note that Cor. 0.3 in [P1], which is used in
Section 6.2 \cite{Her}, holds in fact in $\T$ modulo $\I_q$
instead of $\T/\simeq^{alg}$). This concludes the proof of the
Theorem \ref{main}
 by Prop. \ref{lem1}. \\ \\
\textbf{Remark : } i) The condition in the theorem on the
relationship between $K$ and $D$ can be weakened to that
$\F(\iota_* D)-deg(D)[J]$ is contained in
$\I_q$.\\
ii) This method also gives a way to compute the terms in $\I_q$,
which in turn
gives relations in the tautological subring $\T$. \\ \\
\textbf{Examples :}
Let $C \subset \mathbb{P}^r$ be a smooth curve which is a
complete intersection of hypersurfaces of
degrees $(d_1, \ldots, d_{r-1})$. One shows easily that
 $K_C=\mathcal{O}_{\mathbb{P}^r}(\sum d_i -r-1) |_C $ which
is a multiple of $D := C \cap H \in g^r_d$ with $d= \prod d_i$
and $H$ is a hyperplane in $\mathbb{P}^r$. If $d_1, \ldots, d_{r-1}$
are sufficiently big, then $A(r,d,g) >0$ and Theorem \ref{main} applies.
We note that in this case $d-2r+1$ is much smaller than
$\frac{g}{2}+1$ because $g = \frac{( \sum d_i -(r+1) )d}{2}+1$, thus this
result is sharper than that in \cite{P2}. \\ \\

\section{Another proof of the Theorem}
In this section we give the sketch of a proof of Theorem \ref{main}
based upon the method described in \cite{Her}.
Although this approach is different, we encounter the same difficulties
as above, i.e we need a relation between $D$ and $K$ to proceed.

We will note $C_n$ for the $n$-th symmetric product of $C$. Suppose
that $C$ carries $\G$ a base point free $g^r_d$
 which contains $D=o_1 + \ldots + o_d$.
 Theorem 3 in \cite{Her} gives the class $\G$ in $CH(C_d)$ in terms
of the classes of the diagonals
$\delta_{i_1,\ldots,i_r}=\{ i_1x_1+\ldots+i_rx_r | (x_1,\ldots,x_r) \in C^r \}$ :
\begin{displaymath}
[\G] =   \sum_{ \scriptstyle 1 \leq i_1 \leq \ldots \leq i_r  \atop \scriptstyle 1 \leq j_1 < \ldots < j_s \leq d    } \Big{(}\prod_{u=1}^{r} \frac{(-1)^{i_u-1}}{i_u} \Big{)} \ \  [ \ \delta_{i_1,\ldots,i_r}+o_{j_1}+\ldots+o_{j_s} \ ],
\end{displaymath}
where $s=d-\sum_{j=1}^r i_j$.

Consider the morphism $u_d \ : \ C_d \rightarrow  J$ which
maps $D$ onto the class of the divisor $D-dx_0$.
It contracts $\G$ onto
a point in the Jacobian, so  ${u_d}_*[\G]=0$. Moreover,
${u_d}_* [ \delta_{i_1,\ldots,i_r} + o_1 + \ldots + o_{d-\sum i_a} ]$
is a multiple of $(i_1*C )* \ldots * (i_r*C )* [o_1+ \ldots +o_{d-\sum i_a}-(d-\sum i_a)x_0]$.
Then we use the same arguments as in the begining of section 5 in \cite{Her},
this is the decomposition (\ref{decomp_courbe}),
the relations $i_*C_{(a)}=i^{a+2}C_{(a)}$
and the  projection onto $CH(J)^{g-r}_{(s)}$ for $s \geq 0$.  We get :
\begin{equation}\label{relation2}
\sum_{\scriptstyle 0 \leq a_1,\ldots ,a_r,t \atop \scriptstyle a_1+\ldots + a_r+t=s }
  \ C_{(a_1)}* \ldots* C_{(a_r)}*\beta(d,a_1,\ldots,a_r)_{(t)} = 0  \textrm { \ \ where  }
\end{equation}
\[\beta(d,a_1,\ldots,a_r)=
\sum_{i_1=1}^{d} \ldots \sum_{i_r=1}^{d}
 (-1)^{i_1+\ldots +i_r }  {i_1}^{a_1+1} \ldots {i_r}^{a_r+1} \alpha_{d-i_1-\ldots -i_r} \textrm{ ,}\]
and where
$$\alpha_u= \sum_{\scriptstyle 1 \leq k_1<k_2< \ldots < k_u \leq d} \  [o_{k_1}+ \ldots + o_{k_u}-u \ x_0] .$$
We do not know if the cycles $\alpha_u$ are in $\T$ in general.
As in the previous section, we only know how to obtain
relations in $\T$ when $D$ is a multiple of $K$,
which we will suppose from now on.
In this case $\alpha_1$ is a multiple of $\iota_* K$ and
thus belongs to $ \taut$. Then we use an induction and
the relations
$$
(i+1) \alpha_{i+1} = \alpha_i * (\alpha_1) - \alpha_{i-1}*(2_*
\alpha_1)+ \cdots + (-1)^{i-1} \alpha_1*(i_* \alpha_1)+(-1)^i(i+1)_* \alpha_1
$$
to prove that $\alpha_u \in \taut$ for $0 \leq u \leq d$. Now let
us apply $\F$ to \eqref{relation2} and deduce relations
modulo $\I_q$. As the
elements $\F ( \alpha_u ) $ are Fourier transforms of $0$-cycles, they
belong to $\bigoplus_{s \geq 0} (\T \cap
CH(J)^{s}_{(s)})=\mathbb{Q}[q_0,q_1,\ldots,q_g]$ and we
have $ \F ( \alpha_u ) = \F ( \alpha_u )_{(0)} \  (  \textrm{mod } \I_q  )$.
 Then, $\F(\beta(d,a_1,\ldots,a_r)_{(t)}) \in \I_q$ if $t>0$.
As
$$ \F(\alpha_u)_{(0)} \ = \ \sum_{1 \leq k_1 < \ldots < k_u \leq d} [J] \ = \ \binom{d}{u} [J], $$
we obtain the relations
\[ \sum_{\scriptstyle 0 \leq a_1,\ldots ,a_r \atop \scriptstyle a_1+\ldots + a_r=s }
 \gamma(d,a_1,\ldots,a_r) \ p_{a_1+1}\ldots p_{a_r+1} \ = \ 0 \ \ \left ( \textrm{ mod } \I_q \right ) \ \ \textrm { \ \ where  }\]
\[ \gamma(d,a_1,\ldots,a_r)=
\sum_{i_1=1}^{d} \ldots \sum_{i_r=1}^{d} (-1)^{i_1+\ldots +i_r }
 \binom{d}{i_1+\ldots + i_r} {i_1}^{a_1+1} \ldots {i_r}^{a_r+1} \textrm{ .}\]
Now according to the appendix of Don Zagier in \cite{GK} these
relations are the same as those in Theorem \ref{second}. Then using Prop \ref{lem1}, a
similar argument as that in the precedent section concludes the
proof.

\section{Curves admitting a pure ramification}
We begin with the following easy result.
\begin{Prop}\label{prop2}
Let $C$ be a smooth projective curve of genus $g$ and  $x \in C$ a
point such that $K_C = (2g-2) x$ in $CH^1(C)_\qit$. Then
 the tautological ring $\T$ on $J$
    is generated over $\qit$ by the classes $(p_n)_{n < g/2+1}$ and
$q_1$.
\end{Prop}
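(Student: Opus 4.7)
The plan is to exploit the fact that the hypothesis $K_C = (2g-2)x$ in $CH^1(C)_\qit$ forces $\iota_*K$ to be a scalar multiple of the class of a single point in $J$, so that its Fourier transform can be computed explicitly and matched with the known expansion in terms of the $q_i$'s.

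First, pushing forward under $\iota:C\to J$ converts the hypothesis into
\[
k_*\iota_*K \;=\; (2g-2)\,[ka] \quad\text{in } CH^g(J)_\qit
\]
for every $k\in\zit$, where $a:=\iota(x)\in J$ and $[ka]$ denotes the class of the point $ka$.

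Second, I would compute the Fourier transform of the class of a point $b\in J$. Setting $\mathcal{L}_b:=P|_{\{b\}\times J}\in \operatorname{Pic}^0(J)$, a direct projection-formula argument for the inclusion $\{b\}\times J\hookrightarrow J\times J$ gives
\[
\F([b]) \;=\; \exp\bigl(c_1(\mathcal{L}_b)\bigr) \;=\; \sum_{i\geq 0}\frac{c_1(\mathcal{L}_b)^i}{i!}.
\]
Since $\mathcal{L}_b\in\operatorname{Pic}^0(J)$, we have $c_1(\mathcal{L}_b)\in CH^1_{(1)}(J)$, so the $i$-th summand lies in $CH^i_{(i)}(J)$. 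Applying this with $b=ka$ and using $\mathcal{L}_{ka}=\mathcal{L}_a^{\otimes k}$ (which yields $c_1(\mathcal{L}_{ka})=k\,c_1(\mathcal{L}_a)$) gives
\[
\F(k_*\iota_*K) \;=\; (2g-2)\,\exp\bigl(k\,c_1(\mathcal{L}_a)\bigr).
\]

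Comparing this with the identity $\F(k_*\iota_*K)=(2g-2)[J]+2\sum_{i=1}^g k^i q_i$ recalled in the first proof of Theorem~\ref{main}, and matching graded pieces in $CH^i_{(i)}(J)$ for each $i\geq 1$, I deduce
\[
q_i \;=\; \frac{g-1}{i!}\,c_1(\mathcal{L}_a)^i.
\]
In particular $q_1=(g-1)\,c_1(\mathcal{L}_a)$, so (using $g\geq 2$) every $q_i$ with $i\geq 1$ is the explicit polynomial $q_i = q_1^i/\bigl(i!\,(g-1)^{i-1}\bigr)$ in $q_1$ alone. Combining this with Polishchuk's theorem from \cite{P2} (recalled in the introduction) that $\T$ is generated by $(p_i)_{i<g/2+1}$ and $(q_i)_{i<(g+1)/2}$ then yields the claimed description of $\T$. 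The only non-formal step I expect to need to justify carefully is the point-class Fourier computation $\F([b])=\exp(c_1(\mathcal{L}_b))$; this is essentially definitional, but is the crucial input of the argument.
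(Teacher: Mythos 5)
Your proof is correct and follows essentially the same route as the paper: both arguments reduce the statement to the identity $q_k = q_1^k/(k!\,(g-1)^{k-1})$ and then conclude by Polishchuk's generation result (Prop.~4.2 of \cite{P2}). The only difference is in how that identity is obtained --- the paper quotes $\F([c])=\exp(2q_1)$ from the proof of Polishchuk's Prop.~4.3 and deduces the Poincar\'e-type formula $c_k=c_1^{*k}/k!$, whereas you recompute the Fourier transform of a point class directly from the definition of $\F$ and pin down $c_1(\mathcal{L}_a)=q_1/(g-1)$ by matching against the expansion $\F(k_*\iota_*K)=(2g-2)+2\sum_i k^i q_i$, which is the same input ($q_n=\F(\eta_n)$, from page~4 of \cite{P2}) that the paper also relies on.
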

\begin{proof}
Let $[c]$ be the image of the canonical divisor of $C$ in $J$ and
write $[c] = [o] + c_1+ \cdots +c_g$ the decomposition after
\eqref{decomp_beauville}. By the proof of Prop. 4.3 in [Pol], we
have $\F([c])= exp(2q_1)$, which gives $\F(c_k)=(2q_1)^k/k! =
\F(c_1)^k/k!$. As a consequence, we obtain a Poincar\'e-type
formula: $ c_k = \frac{c_1^{*k}}{k!}.$

Now suppose that there exists a point $x \in C$ such that $K_C =
(2g-2) x$ in $CH^1(C)_\qit$. Then $\iota_* K = 2(g-1)[x-x_0]$,
which gives that $(2g-2)_* \iota_* K = 2(g-1)[c].$ Let
$\eta=\iota_*K/2+[0]$ and $\eta_n$ the component of $\eta$ in
$CH^g_{(n)}$. By the formula on page 4 of \cite{P2}, we have $q_n
= \F(\eta_n)$. The precedent discussions give that
 $c_k = 2(2g-2)^{k-1}
\eta_k$.  This gives $2(2g-2)^{k-1} q_k = \F(c_k) =
\F(c_1^{*k})/k! = \F(c_1)^k/k! = (2^k/k!)q_1^k$, which gives
$$q_k = \cfrac{q_1^k}{k! (g-1)^{k-1}}.$$   By Prop. 4.2 of [Pol],
we obtain the claim.
\end{proof}
\noindent \textbf{Remark : }
Similar argument can be applied to more general cases. For
example, if $K=(g-1)(x+y)$ for some points $x, y \in C$. Then we
have $\iota_* K = (g-1)([x-x_0]+[y-x_0]) =
[x+y-2x_0]*(-1)_*\iota_* K$. Apply $(g-1)_*$ to both sides, we
obtain $(g-1)_* (2\eta-2[o]) = [c]*(1-g)_*(2\eta - 2[o])$. This
gives that $$ [o]+\sum_{i=1} (g-1)^{i-1} \eta_i= ([o]+\sum_{i=1}
c_i)* ([o] + \sum_{i=1} (-1)^i (g-1)^{i-1} \eta_i).
$$

The first non-trivial relation is $2(g-1)^2 \eta_3
= c_3-\eta_1*c_2+(g-1)\eta_2*c_1$.  Applying the Fourier
transformation, we obtain: $2(g-1)^2 q_3 = -2/3q_1^3 +
2(g-1)q_2q_1.$ Similarly we obtain expressions of $q_5, q_7
\cdots$.  This gives that the tautological ring is generated by
$(p_n)_{n <g/2+1}$, $q_1$ and $(q_{2k})_{2k <g/2+1}$. \\

We say that a curve $C$ admits a {\em pure ramification} of degree
$d$ if there exists a degree $d$ map $C \to \pit^1$ whose
ramification points are all of order $d$. In this case, if we take $x$
to be a ramification point, then by the Hurwitz's formula, we have
$K=(2g-2) x$ in $CH^1(C)_\qit$. Combining with Theorem \ref{main},
we obtain
\begin{Thm} \label{pure}
Let $C$ be a smooth projective curve of genus $g$ which admits a
pure ramification of degree $d$, then the tautological ring $\T$
on its Jacobian is generated by the classes $(p_i)_{i \leq d-1}$
and $q_1$.
\end{Thm}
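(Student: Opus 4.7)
The strategy is to verify that a curve admitting a pure ramification of degree $d$ simultaneously satisfies the hypotheses of Theorem \ref{main} (with $r=1$) and of Proposition \ref{prop2}, and then to stitch their two conclusions together.

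The first step is to check that $K_C = (2g-2)x$ in $CH^1(C)_\qit$ for any ramification point $x$. Writing $f: C \to \pit^1$ for the cover and $x_1, \ldots, x_n$ for the (totally ramified) ramification points, Hurwitz's formula gives $K_C = -2 f^*(f(x)) + (d-1)\sum_i x_i = -2dx + (d-1)\sum_i x_i$. Since all fibers of $f$ are linearly equivalent, the divisors $d x_i = f^*(f(x_i))$ agree in $CH^1(C)_\qit$, and hence so do the $x_i$; combining this with the numerical Hurwitz identity $n(d-1) - 2d = 2g-2$ yields the claimed equality.

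Set $D := dx$, which lies in the base-point-free $g^1_d$ determined by the cover. Then $K_C = \tfrac{2g-2}{d} D$ in $CH^1(C)_\qit$, so $K_C$ is a rational multiple of $D$, and a direct computation shows that the Castelnuovo number $A(1,d,g)$ equals $1$. Thus the hypotheses of Theorem \ref{main} are in place, and $\T$ is generated by $(p_i)_{i \leq d-1}$ and $(q_i)_{i \leq d-1}$. The same relation $K_C = (2g-2)x$ places us in the setting of Proposition \ref{prop2}, whose proof produces the explicit identities $q_k = \tfrac{q_1^k}{k!(g-1)^{k-1}}$ for every $k \geq 1$. Substituting these into the generating set above eliminates all $q_i$ with $i \geq 2$, leaving exactly $(p_i)_{i \leq d-1}$ and $q_1$.

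I anticipate no serious obstacle: once Theorem \ref{main} and Proposition \ref{prop2} are granted, the argument reduces to the short Hurwitz-type calculation identifying $K_C$ with $(2g-2)x$ and to the observation that $A(1,d,g) = 1$; the rest is bookkeeping.
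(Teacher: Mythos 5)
Your proposal is correct and follows essentially the same route as the paper: the paper's proof is precisely the observation that a totally ramified point $x$ gives $K=(2g-2)x$ in $CH^1(C)_\qit$ via Hurwitz, so that Theorem \ref{main} (with $r=1$, $D=dx$, $A(1,d,g)=1$) yields generators $(p_i)_{i\le d-1}$, $(q_i)_{i\le d-1}$, and the relations $q_k=q_1^k/(k!(g-1)^{k-1})$ from Proposition \ref{prop2} eliminate the $q_i$ with $i\ge 2$. You have merely spelled out the Hurwitz computation and the Castelnuovo-number check that the paper leaves implicit.
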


For example, let $f_d(x, y)$ be a homogeneous polynomial of degree
$d$ such that the plane curve $C \ : \ (z^d + f_d(x,y)=0)$ is
smooth. The map $C \to \pit^1$ given by $[x:y:z] \mapsto [x:y]$
gives a pure ramification of $C$, thus Theorem \ref{pure} applies.

\section{The case of a hyperellitpic curve}

When $C$ is a hyperelliptic curve, we can obtain a more precise
description of $\T$ as follows:
\begin{Prop}\label{hyper}
Let $C$ be a hyperelliptic curve of genus $g$ and $a$ the smallest
integer such that $q_1^a =0$, then we have $$ \T= \frac{\qit
[p_1,q_1]}{( p_1^{g+1}, q_1 p_1^{g},\ldots, q_1^{a-1}
p_1^{g-a+2},q_1^{a}  )} .$$

\end{Prop}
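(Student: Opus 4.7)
A hyperelliptic curve $C$ admits its degree-$2$ hyperelliptic projection $C \to \pit^1$, whose ramification locus consists of the $2g + 2$ Weierstrass points, all of order $2$; so $C$ carries a pure ramification of degree $2$, and Theorem~\ref{pure} gives that $\taut$ is generated as a $\qit$-algebra by $p_1$ and $q_1$. This provides a surjective ring morphism $\phi \colon \qit[p_1, q_1] \twoheadrightarrow \taut$.

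The relation $q_1^a = 0$ is built into the definition of $a$, and for each $0 \le j \le a - 1$ the monomial $q_1^j p_1^{g+1-j}$ lies in $CH^{g+1}_{(j)}(J)$, which vanishes for codimension reasons. Hence the ideal $I$ in the statement is contained in $\ker \phi$, and $\phi$ descends to a surjection $\bar\phi \colon R := \qit[p_1, q_1]/I \twoheadrightarrow \taut$. A direct monomial reduction (any $p_1^k q_1^j$ with $j \ge a$ or $k + j \ge g + 1$ lies in $I$) shows that $R$ is spanned over $\qit$ by
\[
\mathcal{B} \,:=\, \bigl\{\, p_1^k q_1^j \,:\, 0 \le j \le a - 1,\ 0 \le k \le g - j \,\bigr\}.
\]
Since $p_1 \in CH^1_{(0)}(J)$ and $q_1 \in CH^1_{(1)}(J)$, each monomial $p_1^k q_1^j$ lies in $CH^{k+j}_{(j)}(J)$; as $(k,j) \mapsto (k+j, j)$ is injective, the images $\bar\phi(p_1^k q_1^j)$ for varying $(k, j) \in \mathcal{B}$ fall in pairwise distinct summands of the Beauville bigrading. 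Thus $\bar\phi$ is injective as soon as each individual monomial in $\mathcal{B}$ has non-zero image in $\taut$.

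The main obstacle is precisely this non-vanishing, which I would handle via Fourier duality. Fix $0 \le j \le a - 1$. By minimality of $a$, $q_1^j \ne 0$ in $CH^j_{(j)}(J)$. The Fourier transform $\F$ restricts to a $\qit$-linear automorphism of $\taut$ and carries $CH^p_{(i)}(J)$ to $CH^{g - p + i}_{(i)}(J)$; in particular $\F(q_1^j)$ is a non-zero element of $\taut \cap CH^g_{(j)}(J)$. But since $\taut$ is generated by $p_1$ and $q_1$, the only monomial of the form $p_1^a q_1^b$ landing in $CH^g_{(j)}(J)$ is $p_1^{g-j} q_1^j$, so $\taut \cap CH^g_{(j)}(J) \subseteq \qit \cdot p_1^{g-j} q_1^j$. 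Therefore $\F(q_1^j) = \lambda_j \cdot p_1^{g-j} q_1^j$ for some $\lambda_j \ne 0$, and in particular $p_1^{g-j} q_1^j \ne 0$. Finally, for every $0 \le k \le g - j$, the identity
\[
p_1^{g-j-k} \cdot \bigl(\, p_1^k q_1^j \,\bigr) \,=\, p_1^{g-j} q_1^j \,\ne\, 0
\]
forces $p_1^k q_1^j \ne 0$. The crux is thus the Fourier-duality step: it pairs the lowest-codimension occurrence $q_1^j$ with the highest-codimension one $p_1^{g-j}q_1^j$ in the $j$-th Beauville layer of $\taut$, collapses that layer onto a single monomial, and the non-vanishing then cascades down to all intermediate $p_1^k q_1^j$.
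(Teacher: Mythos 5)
Your proof is correct, and it reaches the conclusion by a genuinely different route at the one place where something must actually be proved. The setup is the same as the paper's: generation of $\taut$ by $p_1$ and $q_1$ via Theorem~\ref{pure} applied to the degree-$2$ hyperelliptic map, the observation that the listed relations hold for trivial reasons ($q_1^a=0$ by definition of $a$, and $q_1^jp_1^{g+1-j}\in CH^{g+1}(J)=0$), and the reduction of injectivity to the non-vanishing of the monomials $p_1^kq_1^j$ with $j\le a-1$ and $k+j\le g$, which live in pairwise distinct pieces of the Beauville bigrading. For that non-vanishing the paper argues with Polishchuk's operator $\D$: from the identity $\D(q_1^mp_1^n)=n(m+n-g-1)\,q_1^mp_1^{n-1}$ and the fact that the coefficient is non-zero when $m+n\le g$, a hypothetical vanishing $q_1^mp_1^n=0$ propagates downward to $q_1^m=0$, contradicting $m<a$. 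You instead use Fourier duality: since $\F$ is an automorphism of $CH(J)$ preserving $\taut$ and sending $CH^p_{(i)}$ to $CH^{g-p+i}_{(i)}$, the non-zero class $q_1^j$ is carried to a non-zero element of $\taut\cap CH^g_{(j)}(J)$, which (by generation) is spanned by $p_1^{g-j}q_1^j$; hence the top monomial of each layer is non-zero, and the intermediate ones follow by divisibility. Both arguments are complete; yours trades the explicit $\D$-computation for the structural properties of $\F$, while the paper's $\D$-argument has the minor advantage of staying entirely inside the polynomial formalism of \cite{P2} and of being the same tool used elsewhere in the paper (e.g.\ in Proposition~\ref{lem1}).
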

Note that according to the proof of Proposition \ref{prop2} the integer $a$ is the dimension of $\T \cap CH^g(J)$.
\begin{proof}
By the precedent Theorem, $\T$ is generated by $p_1$ and $q_1$.
We present another direct proof of this as follows. We will use Proposition \ref{lem1} and prove
 that $p_2 \in \I_q$. Note $\mu$ for the hyperelliptic involution, and
 $D$ for the divisor $2x_0-(p+\mu(p))$ for any $p \in C$.
Then $-C$ is the translated of $C$ by $D$ in the Jacobian.
In terms of cycles we get $(-1)_* [C] = [D] * [C]$.
Let us consider $x_1$ a ramification point of $C$ and $\beta$ the class of the divisor
$x_1 - x_0$.
We get $D=(-2)_* \beta$. But we also have $\iota_* K=2(g-1)\beta$.
Recall that $\iota_* K = 2(\eta -[0])$ and apply $\F$ to get :
\begin{equation}\label{relation}
(-1)_* (p_1 + p_2 + \ldots + p_g) \ = \
\frac{1}{g-1} (-2)^* ( (g-1)[J]+q_1+q_2 + \dots ) * (p_1 + p_2 + \ldots + p_g).
\end{equation}
The relation we obtain in $CH^{2}_{(1)}(J)$ is $p_2=\frac{1}{g-1} p_1q_1$, so
$p_2 \in \I_q$.

We
claim that for any integers $(m,n)$ such that $m <a$ and $m+n <
g+1$, we have $q_1^m p_1^n \neq 0$. In fact, otherwise we can use
an induction and the following formula $$ \D (q_1^m p_1^n) =
n(m+n-g-1) q_1^m p_1^{n-1}$$ to deduce that $q_1^m = 0$, which is
a contradiction.
\end{proof}
\noindent \textbf{Remark :}
The relation $p_2=\frac{1}{g-1} p_1q_1$ is coherent  with the relation $p_2=p_1q_1$ given
as an exemple of Prop 4.2 in \cite{P2} for (hyperelliptic) curves of genus $2$.
Note that equation \eqref{relation}  also gives the relations
 $p_a = \sum_{i=1}^{a-1} \frac{(-2)^{i-1}}{g-1} p_{a-i}q_i$
when $a$ is even, and
$\sum_{i=1}^{a-1} \frac{(-2)^{i}}{g-1} p_{a-i}q_i=0$ when $a$ is odd. \\ \\
If we choose the base point $x_0$ to be one of the ramification
points, then $q_1=0$ and  $\T =\frac{\qit [p_1]}{(p_1^{g+1})},$ so
in this case, Poincar\'e's formula holds modulo rational
equivalence, which has been firstly proved in \cite{Co} (Theorem
2). \\ \\

\quad \\
C.N.R.S., Labo. J. Leray, Facult\'e des sciences, Universit\'e de NANTES \\
2, Rue de la Houssini\`ere,  BP 92208,
             F-44322 Nantes Cedex 03  France \\
fu@math.univ-nantes.fr  \\
\quad \\
IUFM NICE / Laboratoire GRIM - Universit\'e de Toulon et du Var\\
Boulevard Toussaint Merle, 83500 La Seyne Sur Mer, France \\
herbaut@math.unice.fr

\end{document}